\def\qed{\hfill {\hbox{${\vcenter{\vbox{               
   \hrule height 0.4pt\hbox{\vrule width 0.4pt height 6pt
   \kern5pt\vrule width 0.4pt}\hrule height 0.4pt}}}$}}}
\newtheorem{theorem}{Theorem}
\newtheorem{proposition}[theorem]{Proposition}
\newtheorem{corollary}[theorem]{Corollary}
\theoremstyle{definition}
\newtheorem{example}{Example}
\newtheorem{definition}{Definition}
\newtheorem{remark}{Remark}
\date{}
\title{\Large \textbf{Quantum Enhancements via Tribracket Brackets}}
\author{Laira Aggarwal\footnote{Email: laggarwal19@students.claremontmckenna.edu}\and
Sam Nelson\footnote{Email: Sam.Nelson@cmc.edu. Partially supported by Simons Foundation Collaboration Grant 316709}\and
Patricia Rivera\footnote{Email: patriciariv25@gmail.com}}
\begin{document}
\maketitle

\begin{abstract}
We enhance the tribracket counting invariant with \textit{tribracket brackets},
skein invariants of tribracket-colored oriented knots and links analogously
to biquandle brackets. This infinite family of invariants includes the 
classical quantum invariants and tribracket cocycle invariants as special cases,
as well as new invariants. We provide explicit examples as well as questions 
for future work.
\end{abstract}

\parbox{5.5in} {\textsc{Keywords:} 

\textsc{2010 MSC:} 57M27, 57M25}

\section{\large\textbf{Introduction}}\label{I}

In recent decades much work has involved colorings
of knot and link diagrams by algebraic structures such as \textit{quandles}
and \textit{biquandles}. These colorings have an interpretation as elements
of a homset collecting homomorphisms from an object associated to the knot
or link such as the knot group, the knot quandle etc. to the finite coloring
object. This interpretation gives rise to the notion of \textit{enhancements},
stronger invariants obtained from the set of colorings by collecting values
of an invariant of colored knots, starting with \textit{quandle cocycle 
invariants} of \cite{CJKLS} and explored in many subsequent works. See 
\cite{N1,EN} for an overview.

In \cite{NR}, the second listed author and a coauthor proposed the notion
of \textit{quantum enhancements}, quantum invariants of biquandle-colored knots
and links. In \cite{NOR}, the second listed author and coauthors introduced
a class of quantum enhancements known as \textit{biquandle brackets}, skein
invariants of biquandle-colored knots and links. This infinite class of
invariants includes the classical quantum invariants (Alexander-Conway, Jones,
HOMFLYPT and Kauffman polynomials) and the quandle cocycle invariants as special
cases, explicitly uniting these \textit{a priori} unrelated invariants;
moreover, examples of biquandle brackets which are apparently
neither of these types were identified
in \cite{NOR}. In \cite{NOSY} biquandle brackets were extended to define
\textit{biquandle virtual brackets} using a skein relation including a virtual
crossing, in \cite{IM} a type of biquandle bracket with skein relation 
involving graphs was introduced, and in \cite{NO} a method of
computing biquandle bracket invariants recursively with \textit{trace diagrams}
as opposed to via the original state-sum definition was introduced. See 
\cite{N2} for a survey of biquandle bracket invariants.

In \cite{NB}, colorings of knot diagrams by \textit{knot-theoretic ternary
quasigroups} or as we call them, \textit{Niebrzydowski tribrackets}, were
introduced. In this paper we extend the biquandle bracket idea to the
case of tribrackets, defining quantum enhancements via \textit{tribracket 
brackets}. The paper is organized as follows. In Section \ref{B} we review
the basics of tribracket theory. In Section \ref{TB} we introduce tribracket
brackets and provide some examples, including an illustration of the 
computation of the new invariant, examples showing that the new invariant is 
proper enhancement, and the values of the tribracket bracket invariant for 
two brackets on all prime knots with up to eight crossings and prime links 
with up to 7 crossings.
 In Section \ref{Q} we close with some
questions for future research.

\section{\large\textbf{Tribrackets}}\label{B}

We begin with a definition. See \cite{NB, NOO} etc. for more.

\begin{definition}
Let $X$ be a set. A \textit{horizontal tribracket} structure on $X$
is a ternary operation $[,,]:X\times X\times X\to X$ satisfying the 
conditions
\begin{itemize}
\item[(i)] In the equation $[a,b,c]=d$, any three of the variables
determine the fourth, and
\item[(ii)] For all $a,b,c,d\in X$ we have
\[[c,[a,b,c],[a,c,d]]=[b,[a,b,c],[a,b,d]]=[d,[a,b,d],[a,c,d]].\]
\end{itemize}
\end{definition}

The condition (i) makes $X$ a \textit{ternary quasigroup}, meaning that
if we think of $[a,b,c]$ as a three term product, there are well-defined
operations of left, center and right division. That is, each of the equations
\[[x,b,c]=d,\quad [a,x,c]=d \quad\mathrm{and}\quad [a,b,x]=d\]
has a unique solution in $X$. The condition (ii) makes our structure
a \textit{knot-theoretic quasigroup} in the terminology of \cite{NB},
which we call a \textit{Niebrzydowski tribracket} following \cite{NOO}.

\begin{example}
Every group $G$ is a horizontal tribracket under the operation
\[[a,b,c]=ba^{-1}c\]
known as a \textit{Dehn tribracket}. 
\end{example}

\begin{example}
Every $\mathbb{Z}[x^{\pm 1},y^{\pm 1}]$-module is a horizontal tribracket
under the operation
\[[a,b,c]=xb+yc-xya\]
known as an \textit{Alexander tribracket}. 
\end{example}

\begin{example}\label{ex1}
Given a finite set $X=\{1,2,\dots, n\}$ we can specify a tribracket
structure on $X$ with an \textit{operation 3-tensor}, an ordered
list of $n$ $n\times n$ matrices which we may visualize as stacked 
to form a 3-cube. For example,
\[\left[
\left[\begin{array}{rrr}
1 & 3 & 2 \\
2 & 1 & 3 \\
3 & 2 & 1
\end{array}\right],
\left[\begin{array}{rrr}
2 & 1 & 3 \\
3 & 2 & 1 \\
1 & 3 & 2
\end{array}\right],
\left[\begin{array}{rrr}
3 & 2 & 1 \\
1 & 3 & 2\\
2 & 1 & 3
\end{array}\right]
\right]\]
defines a 3-element tribracket where $[a,b,c]$ means the element
in matrix $a$, row $b$ column $c$. For example, $[2,3,1]=1$ and 
$[1,1,2]=3$.
\end{example}

The tribracket axioms are motivated by region coloring for oriented
knots and links according to the rule below:
\[\includegraphics{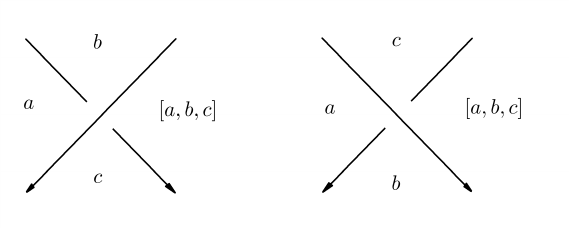}\]

The condition (i) implies that for a given coloring of
a knot or link diagram before a Reidemeister I or II move, there
is a \textit{unique} coloring after the move which agrees with the 
original coloring outside the neighborhood of the move. We illustrate
with the Reidemeister II moves; the Reidemeister I moves are special cases.
\[\raisebox{-0.7in}{\includegraphics{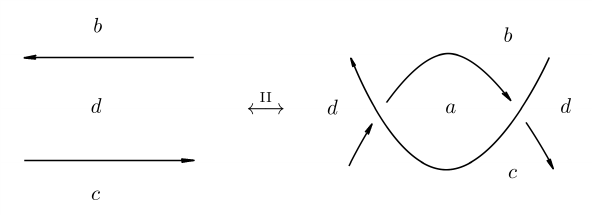}}\qquad 
\forall b,c,d\in X\ \exists!\ a\ s.t.\ [a,b,c]=d \]
\[\raisebox{-0.85in}{\includegraphics{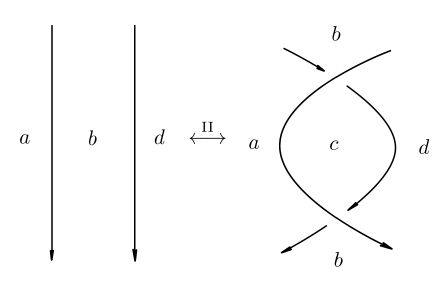}} \qquad
\forall b,c,d\in X\ \exists!\ a\ s.t.\ [a,b,c]=d \]
\[\raisebox{-0.85in}{\includegraphics{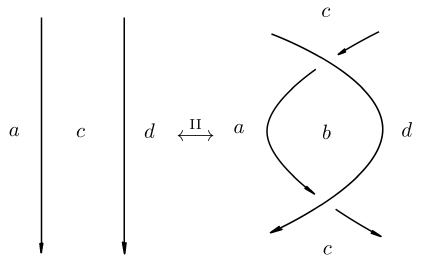}} \qquad
\forall b,c,d\in X\ \exists!\ a\ s.t.\ [a,b,c]=d \]

The condition (ii) ensures the same for Reidemeister III moves:
\[\includegraphics{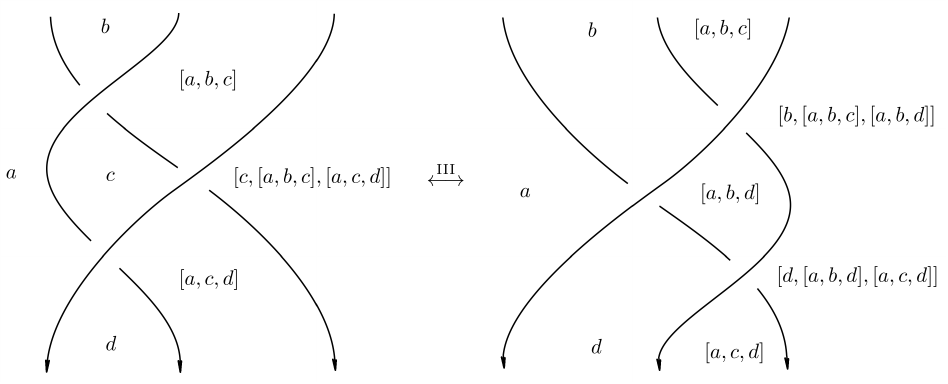}\]

Hence it follows that the number of $X$-colorings of a diagram of a knot or link
$L$ is an integer-valued invariant, known as the 
\textit{tribracket counting invariant}, denoted $\Phi_{X}^{\mathbb{Z}}(L)$.

\begin{example}
Consider colorings of the trefoil knot $K$ and the unknot $U$ by the 
tribracket $X$ in Example \ref{ex1}. 
\[\includegraphics{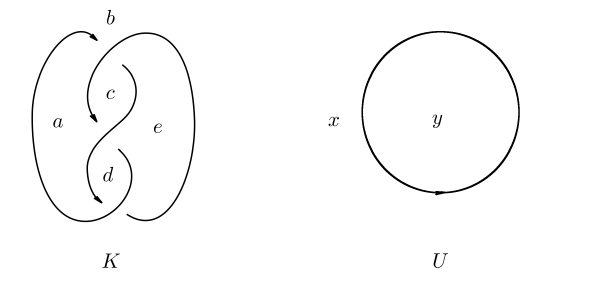}\]
It is easy to see that there are nine 
$X$-colorings of $U$ since we can choose any color $x\in X$ for outside the 
circle and any $y\in X$ for inside; on the other hand, for the trefoil $K$
a check of which possible solutions $(a,b,c,d,e)$ satisfy
the conditions 
\[e=[a,b,c]=[a,c,d]=[a,d,b]\] 
reveals 27 $X$-colorings. 
Then since 
$\Phi_{X}^{\mathbb{Z}}(K)=27\ne9=\Phi_{X}^{\mathbb{Z}}(U)$, the invariant
distinguishes the knots.
\end{example}

We will denote the set of $X$-colorings of an oriented link diagram $L$ by 
$\mathcal{C}_X(L)$; then the counting invariant is the cardinality of
this set, i.e. $\Phi_X^{\mathbb{Z}}(L)=|\mathcal{C}_X(L)|$.

\section{\large\textbf{Tribracket Brackets}}\label{TB}

We will now define a family of quantum enhancements of the tribracket counting
invariant using structures we call \textit{tribracket brackets}.

\begin{definition}
Let $X$ be a tribracket and $R$ a commutative ring with identity. Then a
\textit{tribracket bracket} $\beta$ consists of a pair of maps 
$A,B:X^3\to R^{\times}$ such that
\begin{itemize}
\item[(i)] for all $a,b,c\in X$, the elements 
$-A_{a,b,c}B_{a,b,c}^{-1}-A_{a,b,c}^{-1}B_{a,b,c}$ are all 
equal, with their common value denoted as $\delta$, and
\item[(ii)] for all $a,b,c,d\in X$ we have
\[\begin{array}{rcll}
A_{a,b,c}A_{c,[a,b,c],[a,c,d]}A_{a,c,d} & = &A_{b,[a,b,c],[a,b,d]}A_{a,b,d}A_{d,[a,b,d],[a,c,d]} & (ii.i) \\
A_{a,b,c}B_{c,[a,b,c],[a,c,d]}B_{a,c,d} & = &B_{b,[a,b,c],[a,b,d]}B_{a,b,d}A_{d,[a,b,d],[a,c,d]} & (ii.ii) \\
B_{a,b,c}B_{c,[a,b,c],[a,c,d]}A_{a,c,d} & = &A_{b,[a,b,c],[a,b,d]}B_{a,b,d}B_{d,[a,b,d],[a,c,d]} & (ii.iii) \\
A_{a,b,c}B_{c,[a,b,c],[a,c,d]}A_{a,c,d} & = &A_{b,[a,b,c],[a,b,d]}A_{a,b,d}B_{d,[a,b,d],[a,c,d]} \\
& &+B_{b,[a,b,c],[a,b,d]}A_{a,b,d}A_{d,[a,b,d],[a,c,d]} \\
& &+\delta B_{b,[a,b,c],[a,b,d]}A_{a,b,d}B_{d,[a,b,d],[a,c,d]} \\
& &+B_{b,[a,b,c],[a,b,d]}B_{a,b,d}B_{d,[a,b,d],[a,c,d]} & (ii.iv) \\
A_{a,b,c}A_{c,[a,b,c],[a,c,d]}B_{a,c,d} & & & \\
+B_{a,b,c}A_{c,[a,b,c],[a,c,d]}A_{a,c,d} & & & \\
+\delta B_{a,b,c}A_{c,[a,b,c],[a,c,d]}B_{a,c,d} & & & \\
+B_{a,b,c}B_{c,[a,b,c],[a,c,d]}B_{a,c,d} & = &A_{b,[a,b,c],[a,b,d]}B_{a,b,d}A_{d,[a,b,d],[a,c,d]} & (ii.v).
\end{array}\]
We also define a distinguished element $w=-A_{a,b,b}^2B_{a,b,b}^{-1}$ which is 
required to be the same for all $a,b\in X$.
\end{itemize}
\end{definition}

The tribracket bracket axioms are motivated by the skein relations
\[\includegraphics{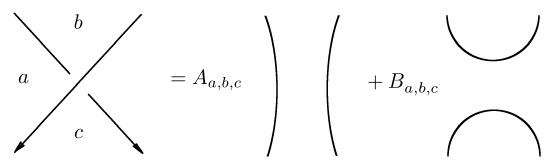}\]
\[\includegraphics{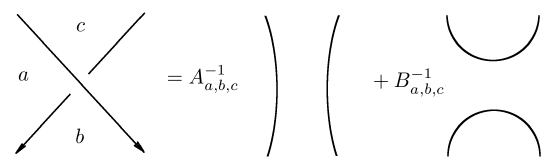}\]
with the elements $\delta$ and $w$ representing the values of a smoothed 
component and a positive kink:
\[\includegraphics{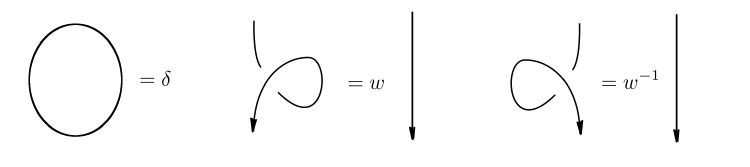}\]

Then the tribracket bracket axioms are the conditions required to make the 
following proposition true:

\begin{proposition}\label{prop1}
Given a tribracket $X$, and $X$-colored link diagram $C$ and tribracket 
bracket $\beta$, we compute an element $\beta(C)$ by first computing the set
of Kauffman states of $C$ and for each state, sum a contribution consisting 
of the product of the smoothing coefficients times $\delta^k$ where $k$
is the number of smoothed components in the state. Finally, we multiply
this sum by the writhe correction factor $w^{n-p}$ where $p$ is the number of
positive crossings and $n$ is the number of negative crossings in $L$. More 
symbolically, that is
\[\beta(C)=w^{n-p}\sum_{\mathrm{states}}\prod 
(\mathrm{smoothing\ coefficients})\delta^k.\]
Then $\beta(C)$is unchanged by $X$-colored Reidemeister moves.
\end{proposition}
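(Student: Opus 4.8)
The plan is to show invariance under each oriented Reidemeister move by checking that the state-sum expression $\beta(C)$ is unchanged when the diagram is modified locally, while the coloring outside the move is held fixed. The overall strategy mirrors the standard verification that the Kauffman bracket is a regular-isotopy invariant, but now every smoothing coefficient depends on the tribracket colors of the three regions adjacent to the crossing, so the algebraic identities that must hold are precisely axioms (i) and (ii). I would organize the proof move by move, and in each case expand both sides of the move into their constituent Kauffman states, collect the smoothing coefficients together with the appropriate power of $\delta$ for each smoothed loop, and verify equality.

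First I would treat the Reidemeister II moves. By condition (i) of the tribracket definition, a coloring before the move extends uniquely to a coloring after the move, so the colorings on the two sides of the move are in bijection and it suffices to compare state sums region-colored compatibly. Expanding a crossing and its partner via the two skein relations produces four states; using the relation $-A_{a,b,c}B_{a,b,c}^{-1}-A_{a,b,c}^{-1}B_{a,b,c}=\delta$ from axiom (i), the cross terms involving a closed smoothed loop contribute the factor $\delta$ and cancel against the unwanted mixed smoothings, leaving exactly the single state corresponding to the two parallel strands. I would do this computation once for each of the three oriented RII configurations shown in the excerpt; they differ only in orientation, and the bookkeeping of which of $A,B$ appears is governed by the crossing signs.

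Next I would handle Reidemeister I. Here the writhe-correction factor $w^{n-p}$ is essential: a positive kink introduces a factor that the state-sum evaluates to (a multiple of) $w$, so that after multiplying by $w^{n-p}$ the net contribution is trivial. Concretely I would expand the kinked crossing into its two smoothings, observe that one smoothing creates a closed loop colored by a repeated region (hence evaluated using $A_{a,b,b}$ and $B_{a,b,b}$), and check that the resulting scalar equals $w$ or $w^{-1}$ according to the sign, exactly matching the definition $w=-A_{a,b,b}^2B_{a,b,b}^{-1}$. This confirms that $\beta(C)$ is a genuine (ambient) isotopy invariant rather than merely a regular-isotopy invariant.

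The main obstacle, and the technical heart of the proof, is Reidemeister III. On each side of the move there are three crossings whose regions are related through the tribracket operation $[,,]$, and expanding all three crossings via the skein relations yields eight states per side. Matching the two sides state-by-state is not possible term-by-term; instead the eight-term sums must be shown equal only after using the tribracket identity (ii) to relate the colors of corresponding regions. The five displayed equations $(ii.i)$ through $(ii.v)$ are exactly the coefficient-matching conditions that arise when one groups the eight states on each side according to which strands are smoothed and coherently smoothed versus crossed. I would set up a careful dictionary between states on the two sides, record for each the product of three smoothing coefficients and the power of $\delta$ from any created loop, and then verify that axioms $(ii.i)$--$(ii.v)$ are precisely what is needed for the grouped sums to agree. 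The delicate points will be tracking the region colors correctly through the move (so that, e.g., the subscripts $[a,b,c]$, $[a,c,d]$, and so on appear in the right places) and accounting for the $\delta$-weighted loop contributions, which is where the extra $\delta$-terms inside $(ii.iv)$ and $(ii.v)$ originate. Once all moves are checked, invariance of $\beta(C)$ under $X$-colored Reidemeister moves follows.
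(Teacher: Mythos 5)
Your proposal takes essentially the same approach as the paper's proof: invariance is checked move by move, with the positive Reidemeister I kink forcing $w=-A_{a,b,b}^2B_{a,b,b}^{-1}$, the Reidemeister II coefficient comparison forcing the $\delta$ condition of axiom (i), and the coefficient-matching of the eight Kauffman states on each side of the Reidemeister III move (including the $\delta$-weighted loop contributions) yielding exactly equations $(ii.i)$--$(ii.v)$. The argument is correct and matches the paper's.
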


This implies our main result:

\begin{corollary}
Given a tribracket $X$ and tribracket bracket $\beta$, for any oriented
link $L$ the multiset
\[\Phi_X^{\beta,M}(L)=\{\beta(C)\ | C\in \mathcal{C}_X(L)\}\]
and the ``polynomial''
\[\Phi_X^{\beta}(L)=\sum_{C\in \mathcal{C}_X(L)}u^{\beta(C)}\]
are invariants of oriented links.
\end{corollary}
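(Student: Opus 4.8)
The plan is to deduce the corollary from Proposition \ref{prop1} together with the fact, built into the tribracket axioms, that $X$-colorings are carried bijectively across Reidemeister moves. First I would invoke the oriented version of Reidemeister's theorem: any two diagrams $L$ and $L'$ of the same oriented link are related by a finite sequence of planar isotopies and oriented Reidemeister moves of types I, II and III. Since both $\Phi_X^{\beta,M}$ and $\Phi_X^{\beta}$ are defined purely in terms of a diagram, it suffices to prove that each is unchanged by a single such move; invariance under an arbitrary sequence then follows by composing the single-move statements.

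Next I would fix a single Reidemeister move relating diagrams $L$ and $L'$ that agree outside a disk $D$ in which the move takes place. The key structural input is that conditions (i) and (ii) of the tribracket definition guarantee that every $X$-coloring of the part of $L$ lying outside $D$ extends uniquely to an $X$-coloring of all of $L$ and, likewise, of all of $L'$; this is exactly the content of the uniqueness statements illustrated for the Reidemeister I, II and III moves in Section \ref{B}, and it is precisely what makes the counting invariant $\Phi_X^{\mathbb{Z}}$ well defined. I would package this observation as a bijection
\[\phi:\mathcal{C}_X(L)\longrightarrow\mathcal{C}_X(L')\]
sending a coloring $C$ of $L$ to the unique coloring $\phi(C)$ of $L'$ that agrees with $C$ outside $D$.

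The heart of the argument is then to observe that, for each $C\in\mathcal{C}_X(L)$, the colored diagrams $C$ and $\phi(C)$ differ by a single $X$-colored Reidemeister move, so Proposition \ref{prop1} gives $\beta(C)=\beta(\phi(C))$. Hence $\phi$ is a $\beta$-value-preserving bijection, whence the multisets $\{\beta(C)\mid C\in\mathcal{C}_X(L)\}$ and $\{\beta(\phi(C))\mid C\in\mathcal{C}_X(L)\}=\{\beta(C')\mid C'\in\mathcal{C}_X(L')\}$ coincide, establishing the invariance of $\Phi_X^{\beta,M}$. Invariance of $\Phi_X^{\beta}$ is then immediate, since $\sum_{C}u^{\beta(C)}$ is simply the generating function recording this multiset, and a bijection preserving the exponents $\beta(C)$ preserves the sum term by term.

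I expect the only genuine obstacle to be bookkeeping rather than mathematics: one must verify that the bijection $\phi$ is well defined and invertible for each of the several oriented variants of the type I, II and III moves, and that the local colored pictures used in the proof of Proposition \ref{prop1} match the global colorings identified under $\phi$. Since Proposition \ref{prop1} already absorbs the delicate state-sum computation and the writhe-correction verification, the corollary reduces to this routine compatibility check together with the standard enhancement principle of summing an $X$-colored invariant over the coloring set.
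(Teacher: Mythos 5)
Your proposal is correct and follows essentially the same route as the paper, which simply presents the corollary as an immediate consequence of Proposition \ref{prop1} (``This implies our main result'') and leaves implicit exactly the argument you spell out: the tribracket axioms give a $\beta$-value-preserving bijection of coloring sets across each Reidemeister move, so the multiset and its generating function are invariant. Your version just makes explicit the standard enhancement bookkeeping that the authors omit.
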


\begin{proof} (of Proposition \ref{prop1})
For each of the moves in our generating set of oriented Reidemeister moves,
we compare the contributions to $\beta(C)$ on both sides of the move and see
that they agree.

First, applying the skein relation at either positive $X$-colored 
Reidemeister I move 
\[\includegraphics{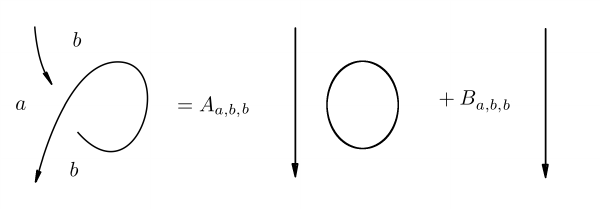}\]
we obtain the requirement that
\begin{eqnarray*}
w & =& A_{a,b,b}\delta +B_{a,b,b} \\
 & =& A_{a,b,b}(-A_{a,b,b}^{-1}B_{a,b,b}-A_{a,b,b}B_{a,b,b}^{-1}) +B_{a,b,b} \\
 & =& -B_{a,b,b}-A_{a,b,b}^2B_{a,b,b}^{-1} +B_{a,b,b} \\
 & =& -A_{a,b,b}^2B_{a,b,b}^{-1}
\end{eqnarray*}
and a similar computation at either negative Reidemeister move yields
$w^{-1}=-A_{a,b,b}^{-2}B_{a,b,b}$.

Next, comparing coefficients of the smoothed tangles
on both sides of each Reidemeister II move 
\[\includegraphics{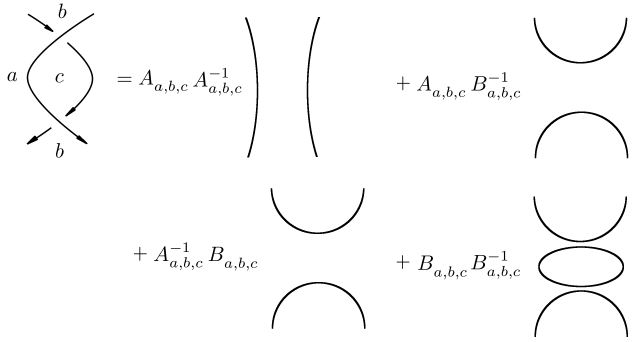}\]
yields the requirement that $\delta=-A_{a,b,b}B_{a,b,b}^{-1}-A_{a,b,b}^{-1}B_{a,b,b}$.
Here we depict one of the four oriented Reidemeister II moves; the
others yield the same result.

Finally, in the Reidemeister III in our generating set in Section \ref{B}, 
on the left side we have
\[\includegraphics{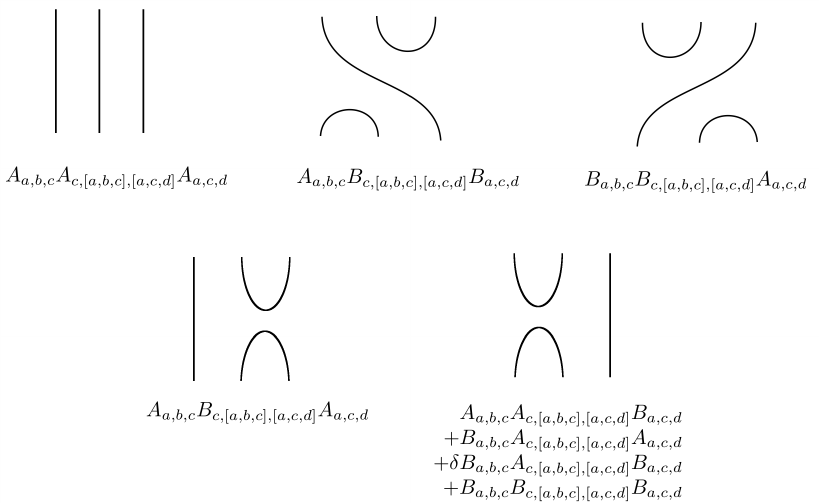}\]
and on the right side, we have
\[\includegraphics{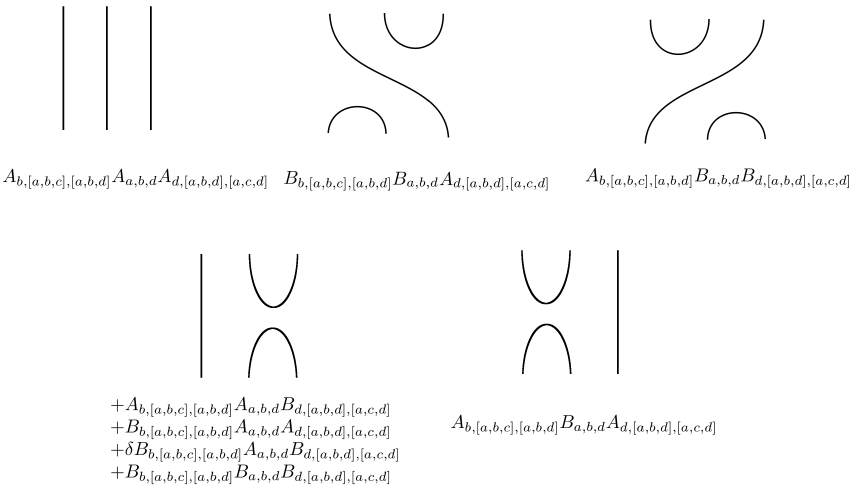}\]
Comparing coefficients then yields the equations (ii.i)-(ii.v).
\end{proof}

For a finite tribracket $X=\{1,2,\dots, n\}$ we can specify a tribracket 
bracket $\beta$ with a pair of 3-tensors $A$ and $B$ whose entries are 
the $A_{x,y,z}$ and $B_{x,y,z}$ coefficients:
\[A=\left[
\left[\begin{array}{rrrr} 
A_{1,1,1} & A_{1,1,2} & \dots & A_{1,1,n} \\
A_{1,2,1} & A_{1,2,2} & \dots & A_{1,2,n} \\
\vdots & \vdots & \ddots & \vdots \\
A_{1,n,1} & A_{1,n,2} & \dots & A_{1,n,n} \\
\end{array}\right],
\dots,
\left[\begin{array}{rrrr} 
A_{n,1,1} & A_{n,1,2} & \dots & A_{n,1,n} \\
A_{n,2,1} & A_{n,2,2} & \dots & A_{n,2,n} \\
\vdots & \vdots & \ddots & \vdots \\
A_{n,n,1} & A_{n,n,2} & \dots & A_{n,n,n} \\
\end{array}\right]\right]\]
and
\[B=\left[
\left[\begin{array}{rrrr} 
B_{1,1,1} & B_{1,1,2} & \dots & B_{1,1,n} \\
B_{1,2,1} & B_{1,2,2} & \dots & B_{1,2,n} \\
\vdots & \vdots & \ddots & \vdots \\
B_{1,n,1} & B_{1,n,2} & \dots & B_{1,n,n} \\
\end{array}\right],
\dots,
\left[\begin{array}{rrrr} 
B_{n,1,1} & B_{n,1,2} & \dots & B_{n,1,n} \\
B_{n,2,1} & B_{n,2,2} & \dots & B_{n,2,n} \\
\vdots & \vdots & \ddots & \vdots \\
B_{n,n,1} & B_{n,n,2} & \dots & B_{n,n,n} \\
\end{array}\right]\right].\]

\begin{example}
If $X=\{1\}$ then setting $A_{1,1,1}=A$ and $B_{1,1,1}=A^{-1}$ gives us 
$\Phi_X^{\beta}(L)$ equal to a normalization of the
 Kauffman bracket/Jones polynomial. 
\end{example}

\begin{example}
If $A_{x,y,z}=B_{x,y,z}$ for all $x,y,z\in X$ then $\beta$ is a tribracket 
2-cocycle and $\Phi_X^{\beta}(L)$ equals the tribracket 2-cocycle invariant
times the Kauffman bracket polynomial evaluated at $A=-1$.
\end{example}

\begin{example}\label{ex2}
Let $X$ be the tribracket specified by the 3-tensor
\[\left[
\left[\begin{array}{rr} 2 & 1 \\1 & 2\end{array}\right],\ 
\left[\begin{array}{rr} 1 & 2 \\2 & 1\end{array}\right]\right]\]
Then our \texttt{python} computations reveal tribracket bracket structures
with $\mathbb{Z}_7$ coefficients including
\[A=
\left[
\left[\begin{array}{rr} 1 & 3 \\2 & 1\end{array}\right],\ 
\left[\begin{array}{rr} 1 & 2 \\3 & 1\end{array}\right]\right]
\ \mathrm{and} \ B=
\left[
\left[\begin{array}{rr} 5 & 1 \\3 & 5\end{array}\right],\ 
\left[\begin{array}{rr} 5 & 3 \\1 & 5\end{array}\right]\right].\]
For instance here we have $A_{1,2,2}=1$ and $B_{2,1,2}=3$.
This tribracket bracket has 
\[\delta=-A_{111}^{-1}B_{111}-A_{111}B_{111}^{-1}=-(1)(5)-1(3)=-8=6\]
and
\[w=-A_{1,1,1}^2B_{111}^{-1}=-(1)^2(3)=-3=4\]
and satisfies sixteen skein relations including
\[\includegraphics{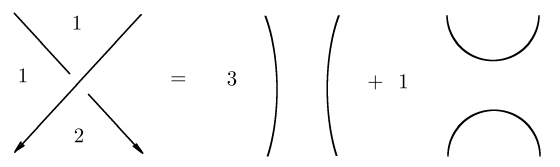}\]
and
\[\includegraphics{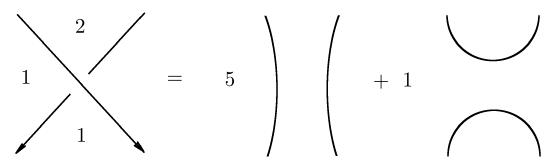}.\]
\end{example}

\begin{example}
The tribracket bracket polynomial invariant $\Phi_X^{\beta}$ associated to the 
tribracket bracket in example \ref{ex2}
distinguishes the Hopf link from the $(4,2)$-torus link with invariant values 
$4u^6+4u$ and $8u$ respectively, demonstrating that the enhancement is proper. 
Let us illustrate the computation of the invariant with the Hopf link. First, 
we find the set of all $X$-colorings of the link:
\[\scalebox{0.85}{\includegraphics{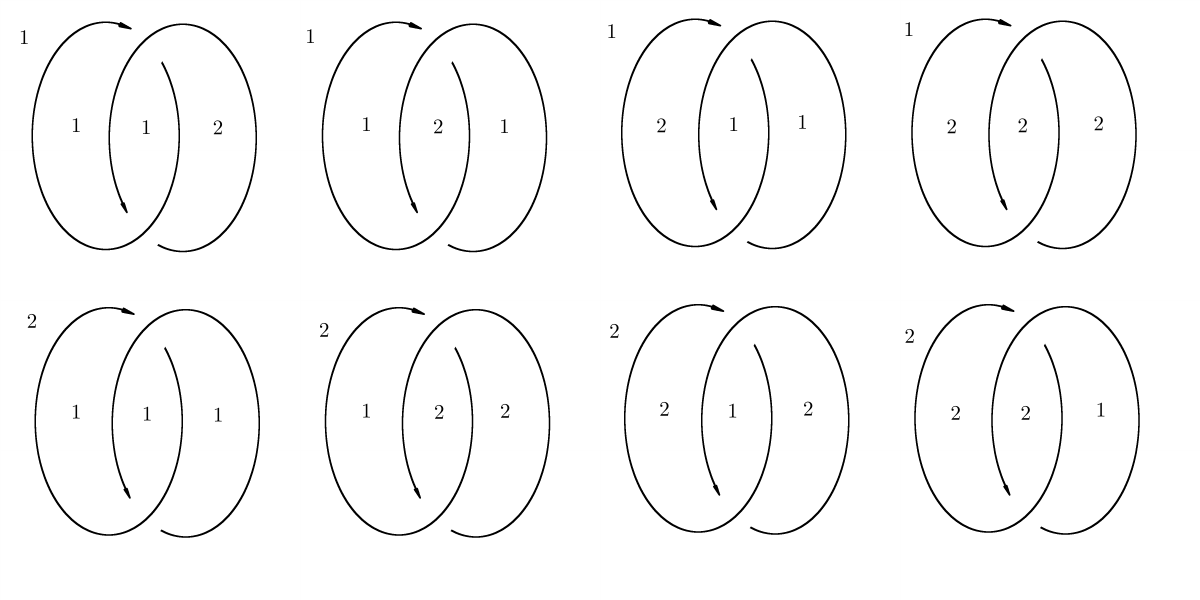}}.\]

Next, for each coloring $C$, we compute the state-sum value $\beta(C)$. We
illustrate the process with a choice of coloring: 
\[\scalebox{0.85}{\includegraphics{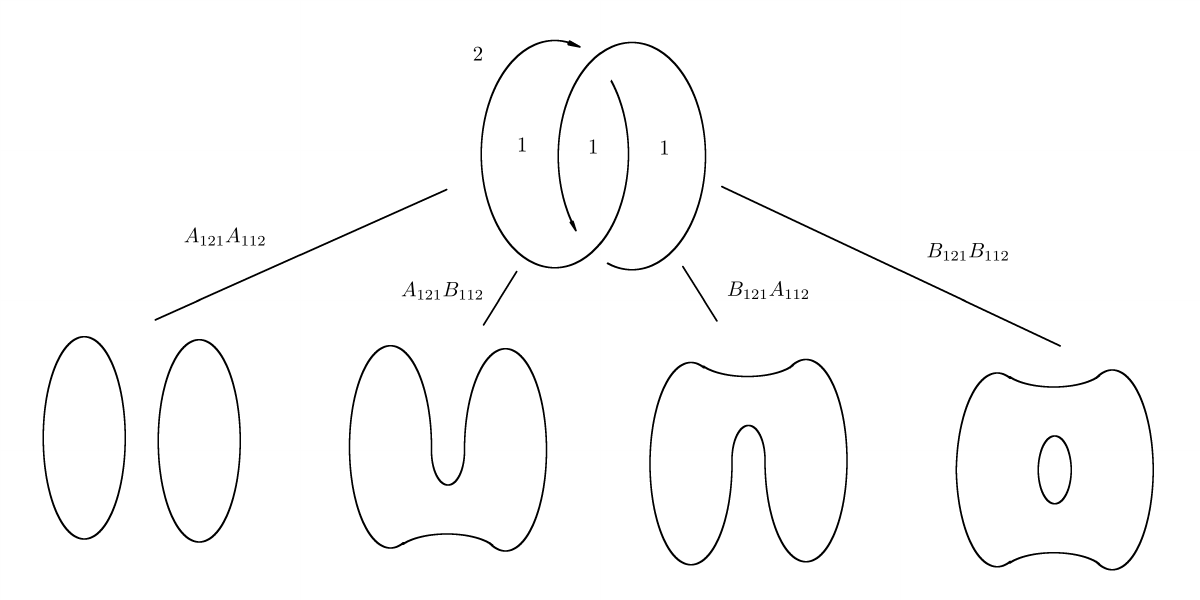}}\]
Thus for this coloring we obtain state-sum value
\begin{eqnarray*}
\beta(C) 
& = & w^{-2}(A_{121}A_{112}\delta^2+A_{121}B_{112}\delta+ B_{121}A_{112}\delta+ B_{121}B_{112}\delta^2)\\
& = & 4^{-2}((2)(3)(6^2)+(2)(1)(6)+ (3)(3)(6)+ (3)(1)(6)^2)\\
& = & 4(6+5+ 5+ 3)=4(5)=6
\end{eqnarray*}
and this coloring contributes $u^6$ to $\Phi_X^{\beta}(\mathrm{Hopf\ link})$.
Repeating for the other colorings and for the $(4,2)$-torus link, we obtain 
the stated result.
\end{example}

\begin{example}
Using \texttt{python} code, we computed the tribracket bracket polynomial 
for all of the prime knots 
with up to 8 crossings and prime links with up to 7 crossings as found in
\cite{KA} (with a choice of orientation for each component) with respect
to the tribracket $X$ from example \ref{ex2} and the $\mathbb{Z}_5$ 
tribracket bracket $\beta$ given by
\[A=
\left[
\left[\begin{array}{rr} 1 & 4 \\2 & 1\end{array}\right],\ 
\left[\begin{array}{rr} 1 & 1 \\3 & 1\end{array}\right]\right]
\ \mathrm{and} \ B=
\left[
\left[\begin{array}{rr} 4 & 1 \\3 & 4\end{array}\right],\ 
\left[\begin{array}{rr} 4 & 4 \\2 & 4\end{array}\right]\right].\]
The results are collected in the table. We note that 
this invariant distinguishes the square knot $SK$ from the granny knot 
$GK$ with $\Phi_X^{\beta_1}(SK)=u^4+3u^2\ne 4u^2=\Phi_X^{\beta_1}(GK)$. 
\[
\begin{array}{l|l}
\Phi_X^{\beta_1}(L) & L \\\hline
4u^2 & 0_1, 3_1, 5_1, 5_2, 6_1, 6_2, 7_1, 7_3, 7_4,\\
     & 8_3, 8_4, 8_9, 8_{12}, 8_{15}, 8_{17}, 8_{18}, 8_{19} \\
u^3+3u^2 & 4_1, 8_1, 8_2, 8_5, 8_6, 8_{11}, 8_{14} \\
u^4+3u^2 & 6_3, 7_2, 7_5, 7_7, 8_7, 8_8, 8_{10}, 8_{13} \\
u^4+u^3+2u^2 & 8_{20}\\
2u^4+2u^2 & 8_{21} \\
3u^2+u & 7_6, 8_{16} \\
3u^4+2u^2+3u & L7n1\\
4u^4+4u & L4a1, L6a1, L7a2 \\
4u^4+4u^2 & L2a1 \\
4u^4+4u^3 & L6a2,L6a3 \\
4u^4+3u^3+u^2 & L7a3, L7a4 \\
4u^4+u^3+3u^2 & L7a6\\
5u^4+2u^2+u & L7a1, L7a5, L7n2 \\ 
6u^4+2u^3 & L5a1 \\
3u^4+5u^3+3u^2+5u & L7a7\\
4u^3+12u^2 & L6a5, L6n1 \\
12u^3+4u & L6a4 \\
\end{array}\]
\end{example}

\begin{remark}
We have defined tribracket brackets analogously to biquandle brackets as in
\cite{NOR}, using the state-sum formulation in which we do all smoothings
simultaneously to obtain a completed set of states. With classical skein 
invariants one often wants to compute the invariant recursively by doing
Reidemeister moves on partially smoothed diagrams. In \cite{NO} a method
for such computations is defined for biquandle brackets using \textit{trace
diagrams}, decorated trivalent graphs which can be given biquandle colorings
for partially smoothed states. We can make an analogous construction with 
tribracket brackets by defining tribracket colorings of trace diagrams in the
following way:
\[\includegraphics{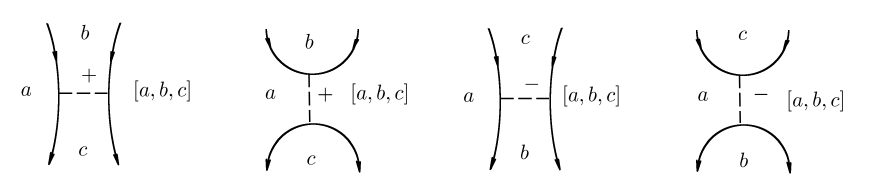}\]
We then have skein relations
\[\includegraphics{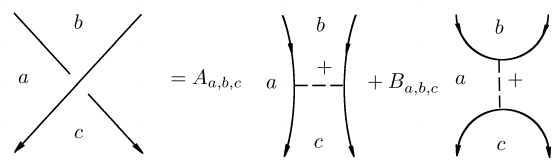}\]
\[\includegraphics{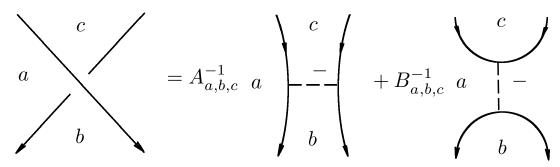}\]
with the value of a crossingless diagram given by $\delta^kw^{n-p}$ where
$n$ is the number of negative signed traces, $p$ is the number of positive
signed traces, and $k$ is the number of components of the bivalent graph 
obtained by deleting all traces.

As observed in \cite{NO} for the biquandle bracket case, for any 
biquandle bracket there is some subset of possible \textit{trace moves},
i.e. moving a strand over, under or through a trace, 
which are allowed and some which are not depending on whether certain
conditions are satisfied by the bracket; an analogous result holds in 
the tribracket bracket case.
\end{remark}

\section{\large\textbf{Questions}}\label{Q}

We conclude with a few questions for future research. 

In the case of biquandle brackets over a ring $R$, some brackets represent 
cocycles in the second cohomology of the coloring biquandle with coefficients 
in $R$, and cohomologous cocycles define the same invariant. To what extent is 
the same thing true in the tribracket bracket case? Indeed, what is the 
relationship between tribracket brackets and biquandle brackets?

As with biquandle brackets, what other quantum enhancements of the tribracket
counting invariant are there beyond those which can be represented as 
tribracket brackets?

For ease of computation, we have considered only examples of tribracket 
brackets over finite rings;
what are some good examples of tribracket brackets over infinite rings
such as $\mathbb{Z}$ or $\mathbb{Z}[t]$?

What is the geometric meaning of a tribracket bracket? How can we categorify
tribracket brackets?

\bigskip

\bibliography{la-sn-pr}{}
\bibliographystyle{abbrv}

\noindent
\textsc{Department of Mathematical Sciences \\
Claremont McKenna College \\
850 Columbia Ave. \\
Claremont, CA 91711} 

\end{document}